\def\Z{\mathbb Z}
\def\cZ{\mathcal Z}
\def\N{\mathbb N}
\def\T{\mathbb T}
\def\Q{\mathbb Q}
\def\R{\mathbb R}
\def\E{\mathbb E}
\def\1{{\mathbf 1}}
\def\K{\mathcal K}
\def\cl{{\rm cl}}
\def\cG{\mathcal{G}}
\def\cB{\mathcal{B}}
\def\fB{{\mathfrak B}}\def\fD{{\mathfrak D}}
\def\cC{\mathcal{C}}
\theoremstyle{plain}
\newtheorem{Thm}{Theorem}
\newtheorem{Lem}{Lemma}
\theoremstyle{definition}
\theoremstyle{remark}
\begin{document}
\title{Note on polynomial recurrence}
\author{Hao Pan}
\email{haopan79@zoho.com}
\address{Department of Mathematics, Nanjing University,
Nanjing 210093, People's Republic of China}
\subjclass[2010]{Primary 37A45; Secondary 05D10}
\keywords{}
\begin{abstract}
Let $(X,\mu,T_1,\ldots,T_l)$ be a measure-preserving system with those $T_i$ are commuting. 
Suppose that the polynomials $p_1(t),\ldots,p_{l}(t)\in\Z[t]$ with $p_j(0)=0$ have distinct degrees.
Then for any $\epsilon>0$ and $A\subseteq X$ with $\mu(A)>0$, the set
$$
\{n:\,\mu(A\cap T_1^{-p_1(n)}A\cap\cdots\cap T_l^{-p_l(n)}A)\geq\mu(A)^{l+1}-\epsilon\}
$$ 
has bounded gaps.
\end{abstract}

\maketitle

Let $(X,\fB,\mu,T)$ be a measure-preserving system. The well-known Poincare recurrence theorem asserts for any $A\in\fB$ with $\mu(A)>0$, there exist infinitely many $n\in\N$ such that $\mu(A\cap T^{-n}A)>0$.
For a subset $S$ of $\N$, we say $S$ is {\it syndetic} if there exists a constant $C>0$ such that
$
S\cap [x,x+C]\neq\emptyset
$
for any sufficiently large $x$, i.e., $S$ has bound gaps. In \cite{K}, Khintchine proved that for any $A\in\fB$ with $\mu(A)>0$ and any $\epsilon>0$, the set
$$
\{n\in\N:\,\mu(A\cap T^{-n}A)\geq\mu(A)^2-\epsilon\}
$$
is syndetic.

In \cite{BHK}, Bergelson, Host and Kra proved that for any $A\in\fB$ with $\mu(A)>0$ and any $\epsilon>0$, if $T$ is ergodic, then both
$$
\{n\in\N:\,\mu(A\cap T^{-n}A\cap T^{-2n}A)\geq\mu(A)^3-\epsilon\}
$$
and 
$$
\{n\in\N:\,\mu(A\cap T^{-n}A\cap T^{-2n}A\cap T^{-3n}A)\geq\mu(A)^4-\epsilon\}
$$
are syndetic. However, they also showed that $\{n\in\N:\,\mu(A\cap\cdots\cap T^{-kn}A)\geq\mu(A)^{k+1}-\epsilon\}$
is maybe not syndetic when $k\geq 4$.
Subsequently, Frantzikinakis \cite{F} obtained the polynomial extensions of the above results. He showed that if  
$T$ is ergodic and $p_1(t),p_2(t)\in\Z[t]$ with $p_i(0)=0$,
then for any $A\in\fB$ with $\mu(A)>0$ and any $\epsilon>0$
$$
\{n\in\N:\,\mu(A\cap T^{-p_1(n)}A\cap T^{-p_2(n)}A)\geq\mu(A)^3-\epsilon\}
$$
is syndetic. Furthermore, for most of the triples $(p_1(t),p_2(t),p_3(t))\in\Z[t]^3$ with $p_i(0)=0$, 
$$
\{n\in\N:\,\mu(A\cap T^{-p_1(n)}A\cap T^{-p_2(n)}A\cap T^{-p_3(n)}A)\geq\mu(A)^4-\epsilon\}
$$
is also syndetic.

On the hand, in \cite{CFH}, Chu, Frantzikinakis and Host proved that if $(X,\mu,T_1,\ldots,T_l)$ be a measure-preserving system with those $T_i$ are commuting, then for distinct $n_1,\ldots,n_l\in\N$ and $\epsilon>0$,
$$
\{n:\,\mu(A\cap T_1^{-n_1^{d_1}}A\cap\cdots\cap T_l^{-n_l^{d_l}}A)\geq\mu(A)^{l+1}-\epsilon\}
$$ 
is syndetic.
Notice that the result of Chu, Frantzikinakis and Host doesn't requirement those $T_i$ are ergodic. Furthermore, as they mentioned, their arguments are also valid for 
$$
\{n:\,\mu(A\cap T_1^{-p_1(t)}A\cap\cdots\cap T_l^{-p_l(t)}A)\geq\mu(A)^{l+1}-\epsilon\},
$$ 
where $p_1(t),\ldots,p_l(t)\in\Z[t]$ satisfy $p_1(0)=0$ and $t^{\deg p_j+1}\mid p_{j+1}(t)$ for $1\leq j<l$.
And they also conjecture that the same assertion should still hold for the polynomials $p_1(t),\ldots,p_l(t)\in\Z[t]$ with $p_j(0)=0$ having distinct degrees.

In the note, we try to give an affirmative answer for this problem.
\begin{Thm}\label{main}
Let $(X,\mu,T_1,\ldots,T_l)$ be a measure-preserving system with those $T_i$ are commuting. 
Suppose that the polynomials $p_1(t),\ldots,p_{l}(t)\in\Z[t]$ with $p_j(0)=0$ have distinct degrees.
Then for any $\epsilon>0$ and $A\subseteq X$ with $\mu(A)>0$, the set
$$
\{n:\,\mu(A\cap T_1^{-p_1(n)}A\cap\cdots\cap T_l^{-p_l(n)}A)\geq\mu(A)^{l+1}-\epsilon\}
$$ 
is syndetic.
\end{Thm}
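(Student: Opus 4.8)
The plan is to reduce the multi-transformation polynomial problem to the "monomial-like" case already covered by Chu–Frantzikinakis–Host (CFH), by exploiting the hypothesis that the $p_j$ have distinct degrees. Without loss of generality I order the polynomials so that $\deg p_1 < \deg p_2 < \cdots < \deg p_l$, writing $d_j=\deg p_j$. The key structural observation is that when the degrees are distinct, the polynomials become "nested" in magnitude along suitable subsequences: for large $n$, the term $p_{j+1}(n)$ dominates $p_j(n)$. The CFH result handles exactly the divisibility-chain condition $t^{d_j+1}\mid p_{j+1}(t)$, so the strategy is to pass from arbitrary distinct-degree polynomials to that special form without destroying syndeticity.

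\medskip

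\noindent\textbf{Step 1 (van der Corput / PET reduction).}
First I would set up the standard averaging framework: syndeticity of the target set follows from showing that the limit
$$
\lim_{N\to\infty}\frac1N\sum_{n=1}^{N}\mathbf 1_A\cdot T_1^{-p_1(n)}\mathbf 1_A\cdots T_l^{-p_l(n)}\mathbf 1_A
$$
is, in a suitable averaged sense, bounded below by $\mu(A)^{l+1}$. The engine for controlling such polynomial multiple averages is the PET (polynomial exhaustion technique) induction combined with the van der Corput inequality, exactly as in the works of Bergelson, Frantzikinakis, and CFH. The distinct-degree hypothesis is precisely what makes the PET scheme terminate cleanly: at each van der Corput step one differences the highest-degree polynomial, and because no two degrees coincide there are no unwanted cancellations that would obstruct the characteristic-factor analysis.

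\medskip

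\noindent\textbf{Step 2 (Reduction to a nilsystem / characteristic factor).}
Next I would argue that the relevant averages are controlled by a characteristic factor that is an inverse limit of nilsystems, so that the problem reduces to the case where $X$ is a nilmanifold $G/\Gamma$ and each $T_i$ is a nilrotation by $g_i\in G$ commuting with the others. On such a system the orbit $(g_1^{p_1(n)}\Gamma,\ldots,g_l^{p_l(n)}\Gamma)$ equidistributes, by the Leibman equidistribution theorem, in a subnilmanifold determined by the algebraic relations among the polynomials. The lower bound $\mu(A)^{l+1}$ should then follow from a positivity (Fubini / convexity) argument on the limiting distribution, mirroring CFH. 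The \emph{main obstacle} is ensuring that the limiting distribution factors as a product measure on the diagonal orbit closure; this is where the distinct-degree hypothesis must do genuine work, guaranteeing that the polynomials $p_1,\ldots,p_l$ are "sufficiently independent" that the closure of $\{(g_1^{p_1(n)},\ldots,g_l^{p_l(n)})\}$ projects onto each coordinate with the right independence to yield the product lower bound.

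\medskip

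\noindent\textbf{Step 3 (Handling general distinct degrees via the CFH chain case).}
Finally, to deduce the general distinct-degree statement from the CFH chain condition $t^{d_j+1}\mid p_{j+1}$, I would use a change of variables along an arithmetic progression: replacing $n$ by $qn+r$ for a well-chosen modulus $q$ and residue $r$ transforms $p_j(n)$ into polynomials whose lower-order behavior can be absorbed. Syndeticity is preserved under intersection with such progressions because a syndetic set meets every long enough interval. The delicate point, and the place I expect to spend the most effort, is that a crude substitution changes the transformations $T_j^{p_j(n)}$ in a way that couples different coordinates, so the reduction cannot be purely algebraic on the polynomials alone; one must instead run the equidistribution argument of Step 2 directly for arbitrary distinct-degree tuples, using the degree separation to verify the hypotheses of Leibman's theorem. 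Thus the heart of the proof is really the equidistribution/positivity analysis of Step 2, with Steps 1 and 3 serving as the reduction scaffolding.
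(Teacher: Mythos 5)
Your outline has the right general flavour (characteristic factors, nilsystems, a positivity step), but it omits the two ideas that actually make the argument work, and Step 3 as written would fail. First, the lower bound is \emph{not} obtained for the Ces\`aro average over all $n$: the proof must first pass to an arithmetic progression $n\in r\N$ for a suitably chosen $r$, precisely so that $r\mid p_j(rn)$ and hence $T_j^{p_j(rn)}$ acts as the identity on the periodic factor $\K_r(T_j)$. This restriction is the entire mechanism by which syndeticity (with gaps bounded by $rN_0$) is extracted, and your Step 1 never introduces it. Second, the positivity step is not an independence or product-measure statement about the orbit closure of $(g_1^{p_1(n)},\ldots,g_l^{p_l(n)})$ --- that limit measure is in general far from a product, and no amount of degree separation will make it one. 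The bound $\mu(A)^{l+1}$ comes instead from the convexity inequality $\int f\cdot\E(f|X_1)\cdots\E(f|X_l)\,d\mu\geq(\int f\,d\mu)^{l+1}$ of Chu (Lemma \ref{EfX}), applied with $X_j=\K_r(T_j)$, after one shows that replacing each $\1_A$ by $\E(\1_A|\K_{rat}(T_j))$ perturbs the averages along $r\N$ by at most $O(\epsilon)$. That replacement (Lemma \ref{fTr}) is where the distinct-degree hypothesis genuinely enters: one inducts on $l$, peeling off the top-degree polynomial by a joint equidistribution statement (Lemmas \ref{cofr}, \ref{tted}, \ref{XYed}, \ref{fsz}) which shows that $a^{p_0(n)}x$ equidistributes jointly with, and independently of, all the lower-degree nilsequence data. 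Your proposal identifies neither the rational Kronecker factor as the relevant characteristic object for the lower bound nor any substitute for this equidistribution input.

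Concerning Step 3: the substitution $n\mapsto qn+r$ cannot produce the Chu--Frantzikinakis--Host chain condition $t^{\deg p_j+1}\mid p_{j+1}(t)$, because no affine change of variable increases the order of vanishing of $p_{j+1}$ at $0$ (for $p_2(t)=t^2+t$, every substitution preserving $p_2(0)=0$ leaves a nonzero linear coefficient). You partly concede this and fall back on ``running the equidistribution argument of Step 2 directly,'' but that is precisely the content that requires a new proof rather than a citation; as it stands the proposal reduces the theorem to itself. The PET/van der Corput machinery of Step 1 is also not needed here --- the convergence of the averages is quoted wholesale from Chu--Frantzikinakis--Host, and the new work is entirely in the lower bound.
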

In view of the well-known Furstenberg correspondence principle, we have the following combinatorial consequence.
\begin{Thm}\label{mainc}
Let $\Lambda\subseteq\Z^k$ with $\overline{d}(\Lambda)>0$ and $\vec{v}_1,\ldots,\vec{v}_l$ be vectors in $\Z^k$.
Suppose that the polynomials $p_1(t),\ldots,p_{l}(t)\in\Z[t]$ with $p_j(0)=0$ have distinct degrees.
Then for any $\epsilon>0$ and $A\subseteq X$ with $\mu(A)>0$, the set
$$
\{n:\,\overline{d}(\Lambda\cap(p_1(n)\vec{v}_1+\Lambda)\cap\cdots\cap (p_l(n)\vec{v}_l+\Lambda)\geq\overline{d}(\Lambda)^{l+1}-\epsilon\}
$$ 
is syndetic.
\end{Thm}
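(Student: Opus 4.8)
The plan is to follow the now-standard route for Khintchine-type recurrence: reduce the multicorrelation sequence to a nilsequence up to a uniformly small error, show that its Ces\`aro average is at least $\mu(A)^{l+1}$, and then exploit the equidistribution of nilsequences to upgrade this average bound to syndeticity. Write $f=\mathbf{1}_A$ and study
$$
a(n)=\int_X f\cdot T_1^{p_1(n)}f\cdots T_l^{p_l(n)}f\,d\mu.
$$
Since ergodicity of the individual $T_i$ is not assumed, I would first pass to the ergodic decomposition of the $\Z^l$-action (as in \cite{CFH}), so that the lower bound on $a(n)$ need only be proved fiberwise and then integrated.

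The core step is to locate a characteristic factor. Using Bergelson's PET induction together with van der Corput's inequality, I expect to show that the averages $\frac1N\sum_{n\le N}\prod_j T_j^{p_j(n)}f_j$ are controlled by Host--Kra-type uniformity seminorms attached to the group generated by the $T_i$; consequently $a(n)$ is unchanged, up to $o(1)$ in uniform density, if each $f$ is replaced by its projection onto the associated nilfactor. This is exactly where the hypothesis of \emph{distinct degrees} is essential: it guarantees that the successive "derivatives'' produced by van der Corput never annihilate the leading behavior of the remaining iterates and that the induction terminates with the polynomial iterates genuinely separated, so no cross-term can obstruct the seminorm control.

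With the functions now living on a nilsystem, I would invoke Leibman's equidistribution theorem for polynomial orbits on nilmanifolds to obtain a decomposition $a(n)=\psi(n)+\varepsilon(n)$, where $\psi$ is a nilsequence and $\varepsilon$ is small in uniform (Besicovitch) density. The key lower bound
$$
\liminf_{N\to\infty}\frac1N\sum_{n\le N}a(n)\geq\mu(A)^{l+1}
$$
should then follow from a H\"older/positivity computation on the nilfactor: because the degrees are distinct, the orbit $(p_1(n),\dots,p_l(n))$ totally equidistributes in the relevant sub-nilmanifold, making the $l$ shifted copies of $A$ behave independently and producing the product $\mu(A)^{l+1}$, in the spirit of the Khintchine and Bergelson--Host--Kra independence bounds. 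Finally, to pass from the average bound to syndeticity, I would use that a nilsequence whose average is at least $c$ attains values at least $c-\delta$ on a syndetic set (the orbit returns syndetically to any nonempty open set on which $\psi$ is near its large values, by minimality and unique ergodicity of nilsystems); adding back the uniformly small error $\varepsilon$ and tuning the parameters shows that $\{n:a(n)\geq\mu(A)^{l+1}-\epsilon\}$ is syndetic.

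The main obstacle I anticipate is the characteristic-factor step for commuting, non-jointly-ergodic transformations. Unlike the single-transformation setting there is no clean Host--Kra structure theorem available, so controlling the averages by nilfactor seminorms requires combining the PET/van der Corput reduction with the structure theory for commuting systems (Host's magic extensions or Austin's pleasant and characteristic factors). Making the distinct-degree hypothesis do the work of separating the iterates throughout this reduction—rather than the divisibility hypothesis $t^{\deg p_j+1}\mid p_{j+1}(t)$ used in \cite{CFH}—is the delicate point on which the whole argument turns.
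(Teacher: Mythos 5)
First, note that the statement you are asked to prove is the combinatorial Theorem \ref{mainc}; the paper's entire proof of it is an application of the Furstenberg correspondence principle to Theorem \ref{main} (take $X=\{0,1\}^{\Z^k}$ with the shifts $T_j$ in the directions $\vec{v}_j$). Your proposal never mentions this transference and instead launches directly into a proof of the dynamical statement. That omission is minor, since the correspondence is standard; the serious problem lies in your plan for the dynamical statement itself.

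The key claim on which your argument turns, namely
$$
\liminf_{N\to\infty}\frac1N\sum_{n\le N}a(n)\ \geq\ \mu(A)^{l+1},
$$
is false in general, and the heuristic you give for it (``distinct degrees force total equidistribution of $(p_1(n),\dots,p_l(n))$ in the relevant sub-nilmanifold, so the shifted copies of $A$ behave independently'') does not survive the rational obstructions. Already for a single irrational rotation the average of $\int f\cdot f(\cdot+n\alpha)\cdot f(\cdot+2n\alpha)$ equals $\sum_k \hat f(k)^2\hat f(-2k)$, which can drop below $\mu(A)^3$; and congruence obstructions (e.g.\ $p_j(n)$ identically even, or a rational eigenvalue of $T_j$) prevent any independence statement for the full sequence of $n$. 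This is precisely why the paper does \emph{not} average over all $n$: Lemma \ref{fTr} lets one replace each $\1_A$ by $\E(\1_A|\K_{rat}(T_j))$ only after restricting to $n\in r\N$ for a suitable $r$, at which point $\E(\1_A|\K_r(T_j))$ is genuinely fixed by $T_j^{p_j(rn)}$ (using $p_j(0)=0$), and the lower bound comes not from equidistribution or independence but from Chu's elementary positivity inequality $\int f\,\E(f|X_1)\cdots\E(f|X_l)\,d\mu\geq(\int f\,d\mu)^{l+1}$ (Lemma \ref{EfX}). The restricted average is then at least $\mu(A)^{l+1}-\epsilon$ uniformly over windows, which yields syndeticity immediately, with no need for your final ``nilsequence large on a syndetic set'' step. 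Your plan, as written, has no mechanism for handling the rational Kronecker part, and without it the average lower bound you rely on simply fails. I would also point out that the obstacle you single out as the delicate one --- seminorm control for commuting non-jointly-ergodic transformations with distinct-degree iterates --- is already available off the shelf as \cite[Theorem 1.2]{CFH} and is quoted as such in the paper; the genuinely new work there is the equidistribution analysis along arithmetic progressions (Lemmas \ref{cofr}--\ref{fse}) needed to kill the terms orthogonal to $\K_{rat}(T_i)$.
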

We shall prove Theorem \ref{main} via several auxiliary lemmas.
For a polynomial $f(x)$, let $[x^d]f(x)$ denote the coefficient of $x^d$ in $f(x)$.
\begin{Lem}\label{cofr}
Suppose that $\alpha$ is irrational and $f(x)\in\Q[x]$. Let $q(x)$ be a polynomial such that $[x^k]q$ is irrational for some $k\geq 1$. Then there exists $d\geq\deg f$ such that
$$
[x^d]\big(\alpha f(x)+f(x)q(x)\big)
$$
is irrational. In particular, for any $q(x)\in\R[x]$, there exists $d\geq\deg f$ such that
$$
[x^d]\big(\alpha f(x)+f(x)^2q(x)\big)\not\in\Q.
$$
\end{Lem}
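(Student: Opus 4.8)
The plan rests on the factorization $\alpha f(x)+f(x)q(x)=f(x)\big(\alpha+q(x)\big)$, which reduces everything to understanding where the irrational coefficients of a product $f\cdot h$ can sit, given control on the top irrational coefficient of $h$. Throughout I assume $f\neq 0$, write $m=\deg f$, and let $a_m=[x^m]f\in\Q\setminus\{0\}$ be its nonzero rational leading coefficient. Put $h(x)=\alpha+q(x)$ and $c_j=[x^j]h$; note $c_j=[x^j]q$ for every $j\ge 1$, while only the constant term $c_0=\alpha+[x^0]q$ feels the presence of $\alpha$. The hypothesis $[x^k]q\notin\Q$ for some $k\ge 1$ says exactly that $c_k\notin\Q$ with $k\ge1$, so the set $\{j:c_j\notin\Q\}$ is nonempty and finite; let $k^*$ be its maximum, so that $k^*\ge 1$ and $c_j\in\Q$ for all $j>k^*$.

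I would then simply read off the coefficient of $x^{m+k^*}$ in $f\cdot h$:
$$[x^{m+k^*}]\big(f(x)h(x)\big)=\sum_{i=0}^{m}a_i\,c_{\,m+k^*-i}.$$
In every term with $i<m$ the index $m+k^*-i$ exceeds $k^*$, so $c_{m+k^*-i}\in\Q$ and the term is rational; the single remaining term $i=m$ equals $a_m c_{k^*}$, a nonzero rational times an irrational, hence irrational. Thus the whole sum is irrational, and $d=m+k^*\ge\deg f$ does the job. This proves the first assertion; the only idea being used is that multiplication by the fixed rational polynomial $f$ cannot destroy the \emph{top} irrational coefficient of $h$.

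For the ``in particular'' clause I would apply the first part with $q$ replaced by $Q(x):=f(x)q(x)$, since $\tilde g:=\alpha f+f^2q=f\big(\alpha+fq\big)$. The same computation works verbatim provided $h(x)=\alpha+f(x)q(x)$ has at least one irrational coefficient, i.e.\ provided $h\notin\Q[x]$; this is the crux, because the irrational coefficient of $Q=fq$ might \emph{a priori} sit only in degree $0$, where the hypothesis $k\ge1$ of the first part is unavailable. To close this gap I would prove the claim that, for nonconstant $f\in\Q[x]$, one has $\alpha+fq\notin\Q[x]$ for every $q\in\R[x]$: if instead $\alpha+fq=\rho\in\Q[x]$, then $\rho=f\cdot q+\alpha$ exhibits $q$ and the constant $\alpha$ as quotient and remainder in the division of $\rho$ by $f$ (legitimate since $\deg\alpha=0<\deg f$), and by uniqueness of polynomial division this remainder must agree with the one computed inside $\Q[x]$, forcing $\alpha\in\Q$ — a contradiction. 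Granting the claim, I let $k^*$ be the largest index with $[x^{k^*}]h\notin\Q$ (now possibly $k^*=0$) and repeat the computation above to obtain $[x^{m+k^*}]\tilde g\notin\Q$ with $d=m+k^*\ge\deg f$.

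The main obstacle is precisely this last claim: the first assertion is a one-line bookkeeping argument once one factors, but the second genuinely requires ruling out the accidental cancellation $\alpha+fq\in\Q[x]$, and it is here that the nonconstancy of $f$ (equivalently $\deg f\ge1$) must be used — without it the cancellation really can occur, so this hypothesis cannot be dropped.
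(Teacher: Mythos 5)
Your proof is correct, granting (as you do explicitly) that $f\neq 0$ throughout and that $\deg f\geq 1$ for the second clause; you are right that this last hypothesis cannot be dropped (for $f=1$ and $q=-\alpha$ one has $\alpha f+f^2q=0$), and it is satisfied in the paper's only application, where $f=p_0$ is nonconstant. Your first assertion is proved exactly as in the paper: both arguments take the largest index $d_1\geq 1$ at which $q$ (equivalently $\alpha+q$) has an irrational coefficient and observe that $[x^{\deg f+d_1}]\big(f(x)q(x)\big)$ differs from the irrational number $[x^{\deg f}]f\cdot[x^{d_1}]q$ by a rational, while $\alpha f$ contributes nothing in that degree. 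For the ``in particular'' clause the routes diverge slightly: the paper reduces to the first assertion by showing that $fq$ itself acquires an irrational coefficient in degree $\deg f+d_1$, and then needs $\deg f+d_1\geq 1$ to invoke the first part; you instead prove directly, via uniqueness of quotient and remainder in polynomial division, that $\alpha+fq\notin\Q[x]$, and rerun the coefficient extraction allowing the top irrational index of $\alpha+fq$ to be $0$. Both devices work, but yours has the merit of isolating exactly where the nonconstancy of $f$ enters, a point the paper's proof uses silently.
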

\begin{proof}
Let $d_0=\deg f$. If $[x^d]\big(\alpha f(x)+f(x)q(x)\big)$ is rational, since $\alpha\not\in\Q$ and $f(x)\in\Q[x]$, 
we must have $q(x)\not\in\Q[x]$. Let
$$
d_1=\max\{k:\,[x^{k}]q(x)\text{ is irrational}\}.
$$\
It is easy to see that
$$
[x^{d_0+d_1}]f(x)q(x)-[x^{d_0}]f(x)\cdot [x^{d_1}]q(x)=\sum_{j\geq 1}[x^{d_0-j}]f(x)\cdot[x^{d_1+j}]q(x)\in\Q.
$$
Then noting that $d_1\geq 1$, we have
$$
[x^{d_0+d_1}]\big(\alpha f(x)+f(x)q(x)\big)=[x^{d_0+d_1}]f(x)q(x)\in[x^{d_0}]f(x)\cdot [x^{d_1}]q(x)+\Q
$$
is irrational.

Next, we shall show that for any $q(x)\in\R[x]$, $
[x^k]\big(\alpha f(x)+f(x)^2q(x)\big)\not\in\Q
$ for some $k\geq\deg f$. There is nothing to do when $q(x)\in\Q[x]$, since $\alpha\not\in\Q$.
If $q(x)\not\in\Q[x]$, by the above discussions, we also have $[x^{\deg f+d_1}]f(x)q(x)$ is irrational, where
$d_1$ is the largest integer such that $[x^{d_1}]q(x)$ is irrational. Applying the first assertion of this lemma, we get the desired result.
\end{proof}
For a compact Lie group $G$, let $m_{G}$ denote the unique normal Haar measure on $G$.
\begin{Lem}\label{tted}
Let $m,l\in\N$ and $T:\,\T^{m}\to\T^{m}$ be an ergodic unipotent affine transformation. Suppose that
$p_0(t)\in\Z[t]$, $p_1(t),\ldots,p_{l}(t)\in\R[t]$ and $\deg p_0>\max_{j\geq 1}\deg p_j$. Suppose that
$\tilde{p}(n)=(p_1(n),\ldots,p_{l}(n))$ modulo $1$
is equidistributed on $\T^{l}$. Then for $m_{T^{m}}$-almost (only depending on $T$) every $x\in\T^{m}$, the sequence $(T^{p_0(n)}x,\tilde{p}(n))_{n\in\N}$ is 
equidistributed on $\T^{m}\times\T^{l}$.
\end{Lem}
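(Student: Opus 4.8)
The plan is to establish the joint equidistribution of $(T^{p_0(n)}x,\tilde{p}(n))_{n}$ on $\T^{m}\times\T^{l}$ by Weyl's criterion: it suffices to check that for every nonzero character, that is, every $(a,b)\in(\Z^{m}\times\Z^{l})\setminus\{0\}$,
$$
\frac{1}{N}\sum_{n=1}^{N}e^{2\pi i\left(\langle a,T^{p_0(n)}x\rangle+\langle b,\tilde{p}(n)\rangle\right)}\longrightarrow 0\qquad(N\to\infty).
$$
When $a=0$ we must have $b\neq0$, and the average is exactly $\frac1N\sum_{n\le N}e^{2\pi i\langle b,\tilde{p}(n)\rangle}$, which tends to $0$ by the hypothesis that $\tilde{p}(n)$ is equidistributed on $\T^{l}$. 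So the whole problem reduces to the case $a\neq0$, which I treat next.

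First I would exploit the unipotence of $T$. Writing $Ty=My+\beta$ with $M$ unipotent and $N:=M-I$ (so $N^{m}=0$), the orbit map is polynomial in the exponent,
$$
T^{k}x=\sum_{j\ge0}\binom{k}{j}N^{j}x+\sum_{j\ge1}\binom{k}{j}N^{j-1}\beta,
$$
so that $R_{a}(k):=\langle a,T^{k}x\rangle$ is a genuine real polynomial in $k$ whose coefficient of $\binom{k}{j}$ equals $\langle(N^{\rT})^{j}a,x\rangle+\langle(N^{\rT})^{j-1}a,\beta\rangle$ for $j\ge1$. Substituting the integer polynomial $k=p_{0}(n)$ then produces a real polynomial $R_{a}(p_{0}(n))$ in $n$ of degree $D_{a}\deg p_{0}$, where $D_{a}=\deg R_{a}\ge1$.

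The crux is to show that, for $m_{\T^{m}}$-almost every $x$, the leading coefficient of $R_{a}$ is irrational. Put $J:=\max\{j:(N^{\rT})^{j}a\neq0\}$. If $\langle(N^{\rT})^{J}a,\beta\rangle\neq0$, the drift term raises the degree to $J+1$ and the leading coefficient is $\langle(N^{\rT})^{J}a,\beta\rangle$; here $(N^{\rT})^{J}a$ is a nonzero vector annihilated by $N^{\rT}$, i.e.\ fixed by $M^{\rT}$, so the ergodicity of $T$ forces this pairing to be irrational, since an ergodic affine unipotent map satisfies $\langle c,\beta\rangle\notin\Z$, hence $\notin\Q$, for every nonzero $M^{\rT}$-fixed $c$. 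Otherwise $\langle(N^{\rT})^{J}a,\beta\rangle=0$; the same ergodicity fact then excludes $J=0$ (as $\langle a,\beta\rangle\notin\Q$ when $a$ is $M^{\rT}$-fixed), so $J\ge1$, the top degree is $J$, and the leading coefficient is $\langle(N^{\rT})^{J}a,x\rangle+\langle(N^{\rT})^{J-1}a,\beta\rangle$; as $(N^{\rT})^{J}a$ is a nonzero integer vector, this linear functional of $x$ avoids $\Q$ outside a null set. Taking the union over the countably many $a$ yields one Haar-null set, depending only on $N$ and hence only on $T$, off of which the leading coefficient is irrational for all characters simultaneously. Because $p_{0}\in\Z[t]$, the coefficient of $n^{D_{a}\deg p_{0}}$ in $R_{a}(p_{0}(n))$ is this irrational number times the integer $(\,\text{leading coefficient of }p_{0}\,)^{D_{a}}$, so it remains irrational; Lemma \ref{cofr} records, more generally, that such a high-degree irrational coefficient persists under the substitution $k\mapsto p_{0}(n)$.

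Finally I would close with the degree gap. Since $\deg p_{0}>\max_{j\ge1}\deg p_{j}$, the cross term $\langle b,\tilde{p}(n)\rangle$ has degree at most $\max_{j}\deg p_{j}<\deg p_{0}\le D_{a}\deg p_{0}$, strictly below the degree carrying the irrational coefficient just exhibited. Hence $R_{a}(p_{0}(n))+\langle b,\tilde{p}(n)\rangle$ is a real polynomial with an irrational coefficient in a positive degree, and Weyl's equidistribution theorem for polynomial sequences gives the vanishing of the corresponding average. The main obstacle is the irrationality statement of the third paragraph: one must extract it uniformly in $a$ from ergodicity alone and confine the exceptional points to a single null set depending only on $T$; once that is in place, the Weyl reduction and the degree comparison are routine.
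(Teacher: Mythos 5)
Your proposal is correct and reaches the same endpoint as the paper --- reduce to Weyl's criterion, split off the $\chi_1=1$ case, and show that the phase $R_a(p_0(n))+\langle b,\tilde p(n)\rangle$ has an irrational coefficient in a degree at least $\deg p_0$, which the cross term $\langle b,\tilde p(n)\rangle$ cannot reach --- but the key technical step is handled differently. The paper imports from \cite[Lemma 7.3]{CFH} the normal form $T^nx_0=x_0+n((S-I)x_0+b)+n^2\tilde q(n)$ with $\chi_1((S-I)x_0+b)=e(\alpha)$, $\alpha$ irrational, and then must invoke Lemma \ref{cofr} because the phase has the shape $p_0\alpha+p_0^2(\beta_1\cdot\tilde q)$: the irrationality sits a priori only in the coefficient of $p_0$, and one has to rule out cancellation against the unknown real polynomial $\tilde q$ at higher degrees. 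You instead expand $T^kx$ explicitly in the binomial basis of the unipotent part and locate the irrationality in the \emph{top} coefficient of $R_a(k)$, so no cancellation analysis (and hence no appeal to Lemma \ref{cofr}) is needed; the price is that you must justify the ergodicity input yourself, namely that $\langle c,\beta\rangle\notin\Q$ for every nonzero integer vector $c$ fixed by $M^{\rT}$. That fact is standard and easy (if $\langle c,\beta\rangle=p/q$ then $e(q\langle c,x\rangle)$ is a nonconstant $T$-invariant function), but your wording ``$\langle c,\beta\rangle\notin\Z$, hence $\notin\Q$'' is a slip --- non-integrality does not imply irrationality; you should deduce $\notin\Q$ directly by taking the $q$-th power of the character. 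Note also that this same fact makes your second case vacuous: $(N^{\rT})^{J}a$ is always a nonzero $M^{\rT}$-fixed integer vector, so $\langle (N^{\rT})^{J}a,\beta\rangle$ is automatically irrational and in particular nonzero, the degree of $R_a$ is always $J+1$, and your argument in fact yields equidistribution for \emph{every} $x$, with no exceptional null set --- slightly stronger than the stated lemma. Everything else (the substitution $k=p_0(n)$ multiplying the leading coefficient by the nonzero integer $(\mathrm{lc}\,p_0)^{D_a}$, the degree comparison, the countable union over characters) is sound.
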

\begin{proof}
Suppose that $Tx=Sx+b$ where $S$ is a unipotent homomorphism of $\T^{m}$ and $b\in\T^{m}$.
Clearly we may only consider those $x\in\T^{m}\setminus(b\cdot\Q^m)$.
Suppose that $x_0\in\T^{m}\setminus(b\cdot\Q^m)$. By Weyl's equidistribution theorem, we only need to show that
$$
\lim_{N\to+\infty}\frac1N\sum_{n=1}^N\chi_1(T^{p_0(n)}x_0)\chi_2(\tilde{p}(n))=0
$$
for any non-trivial character $\chi=(\chi_1,\chi_2)$ of $\T^{m}\times\T^{l}$.
If $\chi_1=1$, since $\tilde{p}(n)$ modulo $1$ is equidistributed on $\T^{l}$, we have
$$\lim_{N\to+\infty}\frac1N\sum_{n=1}^N\chi_2(\tilde{p}(n))=0.$$
Assume that $\chi_1\neq 1$. Since $T$ is ergodic unipotent affine transformation, according
to the discussions of \cite[Lemma 7.3]{CFH},
we have
$$
T^n x_0=x_0+n((S-I)x_0+b)+n^2\tilde{q}(n)
$$
for some $\tilde{q}(t)=(q_1(t),\ldots,q_{m_1}(t))\in\R[t]^{m_1}$ 
and
$$
\chi_1((S-I)x_0+b)=e(\alpha)
$$
for some irrational $\alpha$, where $e(x)=\exp(2\pi\sqrt{-1}x)$.

Assume that $\chi_1(x)=e(\beta_1\cdot x)$ and $\chi_2(x)=e(\beta_2\cdot x)$ where $\beta_1\in\T^m$ and $\beta_2\in\T^l$.
Then
$$
\chi_1(T^{p_0(n)}x_0)\cdot\chi_2(\tilde{p}(n))=e\big(\beta_1\cdot x_0+p_0(n)\alpha+p_0(n)^2(\beta_1\cdot \tilde{q}(n))+\beta_2\cdot \tilde{p}(n)\big).
$$
It follows from Lemma \ref{cofr}, there exists $d\geq \deg p_0$ such that
$$
[t^d]\big(p_0(t)\alpha+p_0(t)^2(\beta_1\cdot\tilde{q}(t))\big)
$$
is irrational. 
Noting that $$\deg\big(\beta_2\cdot \tilde{p}(t)\big)\leq\max_{1\leq j\leq l}\{\deg p_j\}<\deg p_0,$$
we know the polynomial
$$
\beta_1\cdot x_0+p_0(t)\alpha+p_0(t)^2(\beta_1\cdot \tilde{q}(t))+\beta_2\cdot\tilde{p}(t)
$$
has at least one non-constant-term coefficient is irrational.
Applying a well-known result of Weyl, we get
\begin{align*}
&\lim_{N\to+\infty}\frac1N\sum_{n=1}^N\chi_1(T^{p_0(n)}x_0)\chi_2(\tilde{p}(n))\\
=&\lim_{N\to+\infty}\frac1N\sum_{n=1}^Ne\big(\beta_1\cdot x_0+p_0(n)\alpha+p_0(n)^2(\beta_1\cdot \tilde{q}(n))+\beta_2\cdot \tilde{p}(n)\big)=0.
\end{align*}
\end{proof}

Here we introduce some notions on nilmanifolds and nilsequences. Suppose that $G$ is a nilpotent Lie group. If the $(k+1)$-th commutator group of $G$ is trivial but the $k$-th is not, then we
say $G$ is a {\it $k$-step nilpotent group}.
Let $\Gamma$ be a discrete co-compact subgroup of $G$. We call $G/\Gamma$
a {\it nilmanifold}. Furthermore, if $G$ is a $k$-step nilpotent group,
we say $G/\Gamma$ is a {\it $k$-step nilmanifold}.

Let a $k$-step nilpotent group $G$ act on the nilmanifold $G/\Gamma$ by left translation, i.e., $T_a(g\Gamma)=(ag)\Gamma$ for a fixed $a\in G$.
Let $\cG/\Gamma$ be the Borel $\sigma$-algebra of $G/\Gamma$. Then for any $a\in G$, we call $(G/\Gamma,\cG/\Gamma,m_{G/\Gamma},T_a)$ a {\it $k$-step system}. Let $a\in G$, $x\in G/\Gamma$ and $f\in\cC(G/\Gamma)$, where $\cC(X)$ denotes the set of all continuous functions on $X$. Then we say $(f(a^nx))_{n\in\N}$ is a {\it basis $k$-step nilsequence}. And a union limit of basis $k$-step nilsequences is called a {\it $k$-step nilsequence}.
\begin{Lem}\label{XYed}
Let $X=G/\Gamma$ be a connected nilmanifold and $a$ be an ergodic element of $G$. Suppose that
$p_0(t),p_1(t),\ldots,p_{l}(t)\in\Z[t]$ with $p_0(0)=0$ and $\deg p_0>\max_{j\geq 1}\deg p_j$. 
Let $Y=H/\Delta$ be a nilmanifold and $g(n)=a_1^{p_1(n)}\cdots a_{l}^{p_l(n)}$, where $a_1,\ldots,a_l\in Y$.
Then there exists an $r_0\in\N$ such that for $m_X$-almost (only depending on $a$) every $x\in X$ such that
for any $r\in r_0\N$, $(a^{p_0(n)}x,g(rn)y)_{n\in\N}$ is equidistributed on $X\times \cl_Y(\{g(n)y:\,n\in\N\})$.
\end{Lem}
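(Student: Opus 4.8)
The plan is to regard $(a^{p_0(n)}x,g(rn)y)$ as a polynomial orbit on the product nilmanifold $X\times Y=(G\times H)/(\Gamma\times\Delta)$ and to identify its orbit closure with $X\times W$, where $W=\cl_Y(\{g(n)y:\,n\in\N\})$. Since $g(n)$ is a polynomial sequence in $H$, its orbit closure $W$ is a finite union of sub-nilmanifolds of $Y$, and the first task is to fix $r_0$ so that for every $r\in r_0\N$ the sub-sampled sequence $g(rn)y$ keeps the same closure $W$ and remains equidistributed on it; projecting to the maximal torus factor of $W$ then turns $g(rn)y$ into an equidistributed polynomial sequence $\tilde p(rn)$ on some torus $\T^{l'}$, with $\deg\tilde p\le\max_{j\ge1}\deg p_j<\deg p_0$. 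This is precisely the degree gap required in Lemma \ref{tted}.

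Next I would reduce the fast $X$-component to the unipotent affine torus model of Lemma \ref{tted}. Representing the connected nilmanifold $X$ through its tower of torus extensions and using the coordinate description of the ergodic translation by $a$ from \cite{CFH}, the orbit $a^{p_0(n)}x$ is, for $m_X$-almost every $x$ with an exceptional null set depending only on $a$, governed at each level by an ergodic unipotent affine transformation $T$ of a torus $\T^{m}$, the non-abelian structure being absorbed into the higher-order term $p_0(n)^2\tilde q(n)$ that appears in the proof of Lemma \ref{tted}. By Weyl's criterion it then suffices to show that $\frac1N\sum_{n=1}^N\chi_1(T^{p_0(n)}x)\chi_2(\tilde p(rn))\to0$ for every nontrivial character $\chi=(\chi_1,\chi_2)$ of $\T^{m}\times\T^{l'}$. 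The case $\chi_1=1$ is immediate from the equidistribution of $\tilde p(rn)$, so the decisive case is $\chi_1\neq1$.

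For $\chi_1\neq1$ the exponent is a real polynomial in $n$ whose coefficient in degree $\deg p_0$ can arise only from the fast component, since the degree gap prevents any term of $\chi_2(\tilde p(rn))$ from reaching that degree. Ergodicity of $T$ gives $\chi_1((S-I)x+b)=e(\alpha)$ with $\alpha$ irrational for almost every $x$, and Lemma \ref{cofr} then produces a coefficient of degree at least $\deg p_0$ of $p_0(t)\alpha+p_0(t)^2(\beta_1\cdot\tilde q(t))$ that is irrational. Hence the exponent has an irrational non-constant coefficient and Weyl's theorem forces the average to $0$, which is exactly the conclusion of Lemma \ref{tted} for $(T^{p_0(n)}x,\tilde p(rn))$. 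Combining the two cases and reassembling the levels yields the equidistribution of $(a^{p_0(n)}x,g(rn)y)$ on $X\times W$ for almost every $x$.

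I expect the main obstacle to be the structural reduction of the second step: faithfully replacing the orbit on a general connected nilmanifold $X$, which need not be a torus, by the unipotent affine torus systems of Lemma \ref{tted}, level by level, while keeping the exceptional null set independent of $x$, $y$ and $r$ and depending only on $a$. Tightly bound to this is the choice of $r_0$, which must be arranged so that sub-sampling along $r_0\N$ neither shrinks the closure $W$ nor destroys the equidistribution of its torus projection $\tilde p(rn)$. Once these reductions are in place, the degree-gap mechanism together with Lemmas \ref{cofr} and \ref{tted} completes the argument with no further difficulty.
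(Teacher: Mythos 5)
Your proposal follows essentially the same route as the paper: reduce to the unipotent affine torus model of \cite{CFH} (the paper cites the reduction in the proof of \cite[Lemma 7.6]{CFH}), exploit the degree gap $\deg p_0>\max_j\deg p_j$, and conclude via Lemma \ref{tted}, whose Weyl/irrationality argument you reproduce. The only point where you diverge slightly is the choice of $r_0$: when $\cl_Y(\{g(n)y\})$ is disconnected one cannot arrange that $g(rn)y$ ``keeps the same closure $W$''; the paper instead invokes \cite[Theorem 7.1(i)]{CFH} to pass to the connected component $\hat Y=\cl_Y(\{g(r_0n)y\})$, rescales the polynomials, and applies the connected case there.
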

\begin{proof}
First, assume that $\cl_Y(\{g(n)y:\,n\in\N\})$ is connected. 
And without loss of generality, we may assume $Y=\cl_Y(\{g(n)y:\,n\in\N\})$.
According to the discussions in the proof of \cite[Lemma 7.6]{CFH}, 
our problem can be reduced to the following special case:\medskip

\noindent $X=\T^{m_1}$ and $Y=\T^{m_2}$. And $T_a:\,x\to ax$ on $X$
and $T_{a_j}:\,y\to a_jy$ on $Y$ are respectively
unipotent affine transformations. Furthermore, $T_a$ is ergodic on $X$ and $(T_{a_1}^{p_1(n)}\cdots T_{a_l}^{p_l(n)}y)_{n\in\N}$ is 
equidistributed on $Y$.\medskip

Note that each coordinate of $T_{a_1}^{p_1(rn)}\cdots T_{a_l}^{p_l(rn)}y$ is a polynomial in $n$ with the degree at most $$\max_{1\leq j\leq l}\deg p_j<\deg p_0.$$
By Lemma \ref{tted}, for $m_X$-almost $x\in X$ and each $r\in\N$, $$
(T_{a}^{p_0(rn)}x,T_{a_1}^{p_1(rn)}\cdots T_{a_l}^{p_l(rn)}y)
$$
is equidistributed on $X\times Y$. Thus the assertion of this lemma holds for connected $\cl_Y(\{g(n)y:\,n\in\N\})$.

Next, assume that $\cl_Y(\{g(n)y:\,n\in\N\})$ is not connected.
Then by \cite[Theorem 7.1 (i)]{CFH}, $\hat{Y}:=\cl_Y(\{g(r_0n)y:\,n\in\N\})$ is connected and
$(\{g(r_0n)y\})_{n\in\N}$ is equidistributed on $\hat{Y}$
for some $r_0\in\N$. 
Let $\hat{p}_0(t)=p_0(r_0t)/r_0$, $\hat{a}=a^{r_0}$ and $\hat{p}_j(t)=p_j(r_0t)$. 
Note that $\hat{a}$ is also an ergodic element of $G$. Then for each $s\in\N$, by the above discussions for the connected case, we can get that
$$
(a^{p_0(r_0sn)}x,a_1^{p_1(r_0sn)}\cdots a_l^{p_l(r_0sn)}y)_{n\in\N}
=(\hat{a}^{\hat{p}_0(sn)}x,a_1^{\hat{p}_1(sn)}\cdots a_l^{\hat{p}_l(sn)}y)_{n\in\N}
$$
is equidistributed on $X\times\hat{Y}$.
\end{proof}
For a measure-preserving system $(X,\fB,\mu,T)$, if $\fD$ is a $T$-invariant sub-$\sigma$-algebra of $\fB$ and $f\in L^1(\mu)$, then let $\E_\mu(f|\fD)$ denote the conditional expectation of $f$ with respect to $\fD$. 
We know that
$$
\int\E_\mu(f|\fD)d\mu=
\int fd\mu.
$$
Furthermore, we say $f\perp\fD$ provided $\E_\mu(f|\fD)=0$.

Let $\K_{rat}(T)$ denote the rational Kronecker factor of $T$, i.e., $\K_{rat}(T)$ is spanned by
$$
\{f\in L^\infty(\mu):\,T^df=f\text{ for some }d\in\N\}.
$$
Clearly $\K_{rat}(T)$ is $T$-invariant.
\begin{Lem}\label{fsz}
Let $X=G/\Gamma$ be a nilmanifold and $a$ be an ergodic element of $G$. 
Suppose that $f\in \cC(X)$ and $f\perp\K_{rat}(T_a)$.
Let $(u_{1,n})_{n\in\N},\ldots,(u_{l,n})_{n\in\N}$ be finite nilsequences.
Suppose that
$p_0(t),p_1(t),\ldots,p_{l}(t)\in\Z[t]$ with $p_0(0)=0$ and $\deg p_0>\max_{j\geq 1}\deg p_j$. 
Then there exists an $r_0\in\N$ such that for $m_X$-almost every $x\in X$ such that
$$
\lim_{N-M\to\infty}\frac1{N-M}\sum_{n=M}^{N-1}f(a^{p_0(rn)}x)\cdot u_{1,p_1(rn)}\cdots u_{l,p_1(rn)}=0
$$
for each $r\in r_0\N$. Furthermore, the set of full $m_X$-measure can only depend on $a$.
\end{Lem}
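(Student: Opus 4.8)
The plan is to fold the nilsequence weights into a single basic nilsequence on a product nilmanifold, apply the joint equidistribution of Lemma \ref{XYed}, and then read off the vanishing from the orthogonality $f\perp\K_{rat}(T_a)$. Since a finite nilsequence is a uniform limit of basic ones, by approximation I may assume each $(u_{j,n})_{n}$ is basic, say $u_{j,n}=h_j(b_j^{n}z_j)$ for a nilmanifold $Y_j=H_j/\Delta_j$, an element $b_j\in H_j$, a point $z_j\in Y_j$ and a continuous $h_j\in\cC(Y_j)$. Put $Y=Y_1\times\cdots\times Y_l$, $H=H_1\times\cdots\times H_l$, $y=(z_1,\ldots,z_l)$, let $a_j\in H$ be $b_j$ placed in the $j$-th coordinate, and set $h=h_1\otimes\cdots\otimes h_l\in\cC(Y)$. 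As the factors commute across coordinates, the product weight becomes one basic nilsequence
$$
u_{1,p_1(rn)}\cdots u_{l,p_l(rn)}=h\big(g(rn)y\big),\qquad g(n)=a_1^{p_1(n)}\cdots a_l^{p_l(n)},
$$
so the average in question is exactly $\frac{1}{N-M}\sum_{n=M}^{N-1}(f\otimes h)\big(a^{p_0(rn)}x,\,g(rn)y\big)$.

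Next I would apply Lemma \ref{XYed} to this $g$: it produces an $r_0\in\N$ and a full-measure set of $x$ depending only on $a$ such that, for every $r\in r_0\N$, the polynomial orbit $(a^{p_0(rn)}x,\,g(rn)y)_{n}$ is equidistributed on $X\times\hat Y$, where $\hat Y=\cl_Y(\{g(n)y:\,n\in\N\})$ (connected after passing to the modulus $r_0$). Because this is a polynomial orbit on the nilmanifold $X\times Y$, Leibman's equivalence of equidistribution and well-distribution lets me replace the ordinary Cesàro average by the uniform one, so testing against the continuous function $f\otimes h$ gives
$$
\lim_{N-M\to\infty}\frac{1}{N-M}\sum_{n=M}^{N-1}f\big(a^{p_0(rn)}x\big)\,h\big(g(rn)y\big)=\int_X f\,dm_X\cdot\int_{\hat Y}h\,dm_{\hat Y}.
$$

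Finally, the orthogonality hypothesis finishes the connected case at once: the constant function is fixed by $T_a$ and hence lies in $\K_{rat}(T_a)$, so $f\perp\K_{rat}(T_a)$ forces $\int_X f\,dm_X=\langle f,1\rangle=0$ and the right-hand side vanishes. The step I expect to be the main obstacle is the disconnected case, which is also where the full strength of $f\perp\K_{rat}(T_a)$ (rather than merely $\int_X f=0$) is used. When $X$ has $k$ connected components, $a$ permutes them cyclically and $\K_{rat}(T_a)$ is precisely the span of their indicator functions, so $f\perp\K_{rat}(T_a)$ says $\int_{X_i}f\,dm_X=0$ on each component $X_i$. One must then split $n$ into residue classes modulo the period so that $a^{p_0(rn)}x$ stays in a single component — reducing to the connected form of Lemma \ref{XYed} on each class — and sum the resulting zero integrals. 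Choosing one modulus $r_0$ that simultaneously resolves the disconnectedness of $X$ and that of $\cl_Y(\{g(n)y:\,n\in\N\})$, while keeping the exceptional null set dependent only on $a$, is the delicate bookkeeping point.
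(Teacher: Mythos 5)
Your proposal is correct and follows essentially the same route as the paper: reduce to basic nilsequences, package the weights as a single basic nilsequence on the product nilmanifold, invoke Lemma \ref{XYed} for joint equidistribution of $(a^{p_0(rn)}x,g(rn)y)$, and conclude from $\int_X f\,dm_X=0$ in the connected case and from orthogonality to the indicators of the components (after passing to $a^{k_0}$ and $p_0(k_0t)/k_0$) in the disconnected case. Your explicit appeal to the equivalence of equidistribution and well-distribution for polynomial orbits, needed to upgrade ordinary Ces\`aro averages to the uniform ones appearing in the statement, is a detail the paper leaves implicit but is handled correctly.
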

\begin{proof}
Assume that $X$ is connected. By an approximation argument, we may assume that each $(u_{i,n})_{n\in\N}$
is a basic finite step nilsequence. Then there exist nilmanifolds $X_i=G_i/\Gamma_i$, elements $a_i\in G_i$ and functions $f_i\in \cC(X_i)$ such that
$u_{i,n}=f_i(a_i^n\Gamma_i)$.

Let
$\tilde{G}=G_1\times\cdots\times G_l$, $\tilde{\Gamma}=\Gamma_1\times\cdots\Gamma_l$
and $\tilde{X}=X_1\times\cdots\times X_l$.
Let $\tilde{a}_i=(0,\ldots,a_i,\ldots,0)$ and $g(n)=\tilde{a}_1^{p_1(n)}\cdots\tilde{a}_l^{p_l(n)}$. By Lemma \ref{XYed}, there exists $r_0\in\N$
such that for any $r\in r_0\N$,
$(a^{p_0(rn)}x,g(rn)\tilde{\Gamma})_{n\in\N}$ is equidistributed on $X\times Y$,
where
$Y=\cl_{\tilde{X}}(g(r_0n)\tilde{\Gamma}:\,n\in\N)$.
So letting $F=f_1\cdots f_l$ and recalling $\E_{m_X}(f|\K_{rat}(T_a))=0$, we have
\begin{align*}
&\lim_{N-M\to\infty}\frac1{N-M}\sum_{n=M}^{N-1}f(a^{p_0(rn)}x)\cdot F(g(rn)\tilde{\Gamma})\\
=&\int fd m_{X}\cdot\int Fd m_{\tilde{X}}=\int \E_{m_X}(f|\K_{rat}(T_a))d m_{X}\cdot\int Fd m_{\tilde{X}}=0.
\end{align*}

Suppose that $X$ is not connected and $X_0$ is a connected component of $X$. Since $a$ is ergodic, there exists $k_0\in\N$ such that $X$ is the disjoint union of $X_i=a^iX_0$, $j=1,\ldots,k_0$. 
Let $\hat{a}=a^{k_0}$, $\hat{p}_0(t)=p_0(k_0t)/k_0$ and $\hat{g}(t)=g(k_0t)$ for $j\geq 1$.
For each $i$, since $X_i$ is connected and $\E_{m_{X_i}}(f|\K_{rat}(T_{\hat{a}}))=0$, there exists $r_i\in\N$ such that
for any $r\in r_i\N$,
$$
\lim_{N-M\to\infty}\frac1{N-M}\sum_{n=M}^{N-1}f(\hat{a}^{\hat{p}_0(rn)}x)\cdot F(\hat{g}(rn)\tilde{\Gamma})=0
$$
for $m_{X_i}$-almost $x\in X_i$. So for any $r\in r_1\ldots r_l\N$ and $m_{X}$-almost $x\in X$, we have
\begin{align*}
&\lim_{N-M\to\infty}\frac1{N-M}\sum_{n=M}^{N-1}f(a^{p_0(k_0rn)}x)\cdot F(g(k_0rn)\tilde{\Gamma})\\
=&
\lim_{N-M\to\infty}\frac1{N-M}\sum_{n=M}^{N-1}f(\hat{a}^{\hat{p}_0(rn)}x)\cdot F(\hat{g}(rn)\tilde{\Gamma})=0.
\end{align*}
\end{proof}

For a measure-preserving system $(X,\fB,\mu,T)$, let
$\cZ_{k,T}$ be the $T$-invariant factor constructed by Host and Kra in \cite{HK}.
Here we won't give the explicit construction of $\cZ_{k,T}$, and the readers may refer to \cite{HK} for the related details.
\begin{Lem}\label{fse}
Let $(X,\mu,T_1,\ldots,T_l)$ be a system. Suppose that
$p_1(t),\ldots,p_{l}(t)\in\Z[t]$ with $p_j(0)=0$ and $\deg p_1>\max_{j\geq 2}\deg p_j$. 
Suppose that $f_1,\ldots,f_l\in L^\infty(\mu)$ and $f_1\perp\K_{rat}(T_1)$. Then for any $\epsilon>0$, there exists $r_0\in\N$ such that for each $r\in r_0\N$,
$$
\lim_{N-M\to\infty}\bigg\|\frac1{N-M}\sum_{n=M}^{N-1}T_1^{p_1(rn)}f_1\cdots
T_l^{p_l(rn)}f_l\bigg\|_2\leq\epsilon.
$$
\end{Lem}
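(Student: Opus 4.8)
The plan is to reduce the multiple average to an integral over the Host–Kra factor $\cZ_{k,T_1}$ and then exploit the structure theorem that characterizes this factor as an inverse limit of nilsystems, at which point Lemma \ref{fsz} applies. First I would observe that, since $f_1\perp\K_{rat}(T_1)$ and $\K_{rat}(T_1)$ is a subfactor of every $\cZ_{k,T_1}$, the hypothesis is stable under projection onto $\cZ_{k,T_1}$. The key preliminary step is a \emph{van der Corput / PET induction} argument, exactly as in \cite{CFH} and \cite{HK}, to show that the $L^2$-limit in question is controlled by a Host–Kra–Gowers seminorm of $f_1$ associated to $T_1$; concretely, there is some $k$ (depending only on the degrees of the $p_j$) such that if $\E_\mu(f_1\mid\cZ_{k,T_1})=0$ then the limit is $0$. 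This lets me replace $f_1$ by its conditional expectation onto $\cZ_{k,T_1}$ with an error of arbitrarily small $L^2$-norm, and hence reduces the problem to the case where $f_1$ is measurable with respect to a nilfactor.

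Next I would invoke the Host–Kra structure theorem: $\cZ_{k,T_1}$ is an inverse limit of $k$-step nilsystems $X_\iota=G_\iota/\Gamma_\iota$ with $T_1$ acting by an ergodic translation $a_\iota$. By a further $L^2$-approximation I may assume $f_1\in\cC(X_\iota)$ for one such nilfactor and that $f_1\perp\K_{rat}(T_{a_\iota})$, since the rational Kronecker factor is preserved under the projection. The remaining factors $f_2,\ldots,f_l$ contribute the sequences $T_j^{p_j(rn)}f_j$; after disintegrating, these become nilsequences in the variable $n$, so the products $u_{j,n}:=$ (value of $f_j$ along the $T_j$-orbit) are finite nilsequences. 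This is precisely the setup of Lemma \ref{fsz}, with $p_0=p_1$ (the dominant-degree polynomial) and $p_1,\ldots,p_l$ there playing the role of $p_2,\ldots,p_l$ here.

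Applying Lemma \ref{fsz} then yields an $r_0\in\N$ such that for every $r\in r_0\N$ and $m_{X_\iota}$-almost every $x$,
\begin{equation*}
\lim_{N-M\to\infty}\frac1{N-M}\sum_{n=M}^{N-1}f_1(a_\iota^{p_1(rn)}x)\cdot u_{2,p_2(rn)}\cdots u_{l,p_l(rn)}=0.
\end{equation*}
Integrating this pointwise statement over $x$ and using the measure-preserving identification of the orbit averages with the $L^2$ average of $T_1^{p_1(rn)}f_1\cdots T_l^{p_l(rn)}f_l$ gives the bound $\leq\epsilon$ after accounting for the approximation errors introduced in the two reduction steps. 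I expect the main obstacle to be the first reduction: establishing cleanly that the relevant seminorm controlling the average is a $T_1$-seminorm (so that orthogonality to $\cZ_{k,T_1}$, and not some joint factor of all the $T_j$, suffices). This is where the hypothesis $\deg p_1>\max_{j\geq2}\deg p_j$ is essential—it ensures that in the van der Corput step the $T_1$-direction dominates and the lower-degree polynomials $p_2,\ldots,p_l$ are absorbed into the nilsequence factors rather than forcing a genuinely multidimensional seminorm. Handling the non-ergodic commuting system and the disconnected nilfactors requires the same ergodic-decomposition and component-splitting bookkeeping already carried out in Lemma \ref{fsz}.
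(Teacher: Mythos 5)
Your overall strategy is the one the paper actually follows: invoke the characteristic-factor theorem of Chu--Frantzikinakis--Host to pass to the Host--Kra factors, use the structure theorem to land on nilsystems, and finish with Lemma \ref{fsz}; the ergodic-decomposition remark for non-ergodic $T_1$ also matches. However, there is a concrete gap in your treatment of $f_2,\ldots,f_l$. You only project $f_1$ onto $\cZ_{k,T_1}$, and your stated form of the reduction (control of the average by a seminorm of $f_1$ alone) gives you no license to modify the other functions; you then assert that ``after disintegrating'' the sequences $n\mapsto f_j(T_j^{p_j(rn)}x)$ are finite nilsequences. For a general $f_j\in L^\infty(\mu)$ this is simply false, and even for a $\cZ_{k,T_j}$-measurable $f_j$ it is not automatic, since the inverse-limit description of $\cZ_{k,T_j}$ only produces nilsequences from continuous functions on the approximating nilsystems. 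What the paper does at this point is twofold: first it uses the full strength of \cite[Theorem 1.2]{CFH} --- the average vanishes in $L^2$ whenever \emph{any} $f_i\perp\cZ_{k,T_i}$ --- which by telescoping lets one replace \emph{every} $f_i$ by $\E_\mu(f_i|\cZ_{k,T_i})$ at no cost; second, it invokes \cite[Proposition 3.1]{CFH} to replace each $f_j$ ($j\geq2$) by a nearby $\tilde f_j$ for which $(\tilde f_j(T_j^{p_j(rn)}x))_{n\in\N}$ is an honest $(k\deg p_j)$-step nilsequence for $\mu$-a.e.\ $x$ and every $r$. Without these two steps the hypotheses of Lemma \ref{fsz} are not met.

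A smaller bookkeeping point: your closing sentence attributes the loss of $\epsilon$ to ``the two reduction steps,'' but the projection of $f_1$ onto $\cZ_{k,T_1}$ (and onto a continuous function on a nilfactor) contributes nothing to the limit by the characteristic-factor theorem and an $L^2$-approximation; the entire $\epsilon$ in the conclusion comes from the replacements $f_j\mapsto\tilde f_j$ for $j\geq2$, each costing at most $\|f_j-\tilde f_j\|_2$ since all other factors are bounded by $1$ in $L^\infty$. Finally, when you apply Lemma \ref{fsz} you should note that the exceptional null set there depends only on $a$ (this uniformity is stated in the lemma precisely so that it can be intersected with the null set coming from the $x$-dependent nilsequences $u_{j,n}$), and that the passage from the resulting a.e.\ pointwise convergence to convergence of the $L^2$-norms uses boundedness and dominated convergence.
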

\begin{proof}
Without loss of generality, we may assume that $\|f_i\|_\infty\leq1$. 
In \cite[Theorem 1.2]{CFH}, Chu, Frantzikinakis and Host proved that
there exists $k\in\N$ (only depending on $\max_{j}\deg p_j$ and $l$) such that
if $f_i\perp \cZ_{k,T_i}$ for some $i$, then
\begin{equation}
\lim_{N-M\to\infty}\bigg\|\frac1{N-M}\sum_{n=M}^{N-1}T_1^{p_1(rn)}f_1\cdots
T_l^{p_l(rn)}f_l\bigg\|_2=0.
\end{equation}
So we may assume that $f_i\in L^\infty(\cZ_{k,T_i},\mu)$ for each $i$.
By \cite[Proposition 3.1]{CFH}, for each $2\leq i\leq l$,
there exists $\tilde{f}_i$ with $\|\tilde{f}_i\|_\infty\leq $ such that

\medskip\noindent(i) $\tilde{f}_i\in  L^\infty(\cZ_{k,T_i},\mu)$ and $\|f_i-\tilde{f}_i\|_2\leq \epsilon/(2l+2)$;

\medskip\noindent(ii) For $\mu$-almost $x\in X$ and every $r\in\N$, $(\tilde{f}_i(T_i^{p_i(rn)}x))_{n\in\N}$ is a $(k\deg p_i)$-step nilsequence.

\medskip
Now it suffices to show that if $f_1\perp \cZ_{k,T_1}$ and $\tilde{f}_i\in L^\infty(\cZ_{k,T_i},\mu)$ for $2\leq i\leq l$, then there exists $r_0\in\N$ such that for any $r\in r_0\N$,
\begin{equation}\label{ft}
\lim_{N-M\to\infty}\bigg\|\frac1{N-M}\sum_{n=M}^{N-1}T_1^{p_1(rn)}f_1\cdot
T_2^{p_2(rn)}\tilde{f}_2\cdots
T_l^{p_l(rn)}\tilde{f}_l\bigg\|_2\leq\frac{\epsilon}2.
\end{equation}

First, assume that $T_1$ is ergodic.
According to the discussions in the proof of \cite[Lemma 7.8]{CFH}, 
we can reduce the proof of (\ref{ft}) to a special case:
$T_1:\,x\to ax$ is an ergodic rotation on a nilmanifold $X=G/\Gamma$ and
$f_1\in\cC(X)$ satisfies $f_1\perp\K_{rat}(T_1)$. Now by (ii) and Lemma \ref{fsz}, for each $r\in r_0\N$,
we have
$$
\lim_{N-M\to\infty}\frac1{N-M}\sum_{n=M}^{N-1}f_1(T_1^{p_1(rn)}x)\cdot
\tilde{f}_2(T_2^{p_2(rn)}x)\cdots
\tilde{f}_l(T_l^{p_l(rn)}x)=0
$$
for $\mu$-almost $x\in X$.
It follows that
$$
\lim_{N-M\to\infty}\bigg\|\frac1{N-M}\sum_{n=M}^{N-1}T_1^{p_1(rn)}f_1\cdot
T_2^{p_2(rn)}\tilde{f}_2\cdots
T_l^{p_l(rn)}\tilde{f}_l\bigg\|_2=0<\frac{\epsilon}2.
$$

When $T_1$ is not ergodic, by the discussions in the proof of \cite[Lemma 7.8]{CFH}, (\ref{ft})
also can be derived via the ergodic decomposition of $\mu$. 
\end{proof}
\begin{Lem}\label{fTr}
Let $(X,\fB,\mu,T_1,\ldots,T_l)$ be a measure-preserving system with those $T_i$ are commuting.
Suppose that
$p_1(t),\ldots,p_{l}(t)\in\Z[t]$ with $p_j(0)=0$ have distinct degrees. 
Suppose that $f_1,\ldots,f_l\in L^\infty(\mu)$ and $f_i\perp\K_{rat}(T_1)$ for some $1\leq i\leq l$. Then for any $\epsilon>0$, there exists $r_0\in\N$ such that for each $r\in r_0\N$,
$$
\lim_{N-M\to\infty}\bigg\|\frac1{N-M}\sum_{n=M}^{N-1}T_1^{p_1(rn)}f_1\cdots
T_l^{p_l(rn)}f_l\bigg\|_2\leq\epsilon.
$$
\end{Lem}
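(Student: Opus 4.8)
The plan is to derive this from Lemma \ref{fse} by induction on the number $l$ of factors, the whole point being to reduce the general position of the orthogonal index to the top-degree position covered by Lemma \ref{fse}. Normalise $\|f_j\|_\infty\leq 1$, fix an index $i$ with $f_i\perp\K_{rat}(T_i)$, and let $m$ be the index for which $\deg p_m$ is maximal; since the degrees are distinct, $m$ is unique and $\deg p_m>\deg p_j$ for all $j\neq m$. If $m=i$ the orthogonal factor already carries the dominant polynomial, so after relabelling $m\mapsto 1$ the hypotheses of Lemma \ref{fse} hold verbatim and we are done; this also settles the base case $l=1$. The substance is the case $m\neq i$.

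In that case I would split the dominant factor as $f_m=\E_\mu(f_m\mid\K_{rat}(T_m))+h_m$ with $h_m\perp\K_{rat}(T_m)$, and treat the two resulting averages separately. For the $h_m$-average the dominant factor is now orthogonal to $\K_{rat}(T_m)$, so Lemma \ref{fse} (relabel $m\mapsto 1$, using $\deg p_m>\max_{j\neq m}\deg p_j$) furnishes an $r_1\in\N$ with
$$
\lim_{N-M\to\infty}\Big\|\frac1{N-M}\sum_{n=M}^{N-1}\Big(\prod_{j\neq m}T_j^{p_j(rn)}f_j\Big)T_m^{p_m(rn)}h_m\Big\|_2\leq\frac{\epsilon}{3}
$$
for every $r\in r_1\N$. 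Crucially this step constrains only the top factor, so it is insensitive to where the index $i$ sits among the others.

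For the remaining average, in which $f_m$ is replaced by $g:=\E_\mu(f_m\mid\K_{rat}(T_m))$, I would first approximate $g$ in $L^2$ by a finite sum $\tilde g$ of eigenfunctions with a common period, i.e.\ $T_m^{d}\tilde g=\tilde g$ for some $d\in\N$, the error being absorbed into $\epsilon$ since the other factors are bounded. The key device is this: if one restricts to $r\in d\N$, then because $p_m(0)=0$ and $p_m\in\Z[t]$ every term of $p_m(rn)=\sum_{k\geq 1}c_k r^k n^k$ is divisible by $r$, hence by $d$, so $T_m^{p_m(rn)}\tilde g=\tilde g$ is \emph{constant} in $n$ and factors out of the average. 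What remains is
$$
\tilde g\cdot\frac1{N-M}\sum_{n=M}^{N-1}\prod_{j\neq m}T_j^{p_j(rn)}f_j,
$$
an average over only $l-1$ factors, with the same transformations, the same polynomials $\{p_j\}_{j\neq m}$ (still of distinct degrees with $p_j(0)=0$), and with $f_i\perp\K_{rat}(T_i)$ still in force since $i\neq m$. The induction hypothesis applied to these $l-1$ factors yields an $r_2\in\N$ making this quantity at most $\epsilon/(3(1+\|\tilde g\|_\infty))$ for $r\in r_2\N$. Taking $r_0=\mathrm{lcm}(r_1,r_2,d)$ and summing the three contributions gives the bound $\epsilon$ for all $r\in r_0\N$.

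The only real obstacle is isolating this period device and checking that it disturbs nothing else: collapsing the rational--Kronecker part of the dominant factor to a constant must drop exactly one factor while leaving the polynomials, the transformations, and the orthogonality of $f_i$ untouched, and one must assemble the finitely many moduli $r_1,r_2,d$ (over a recursion of depth at most $l$) together with the finitely many approximation errors into a single $r_0$ and a single $\epsilon$. Everything hinges on the fact that Lemma \ref{fse} constrains only the highest-degree factor, so that peeling off either the orthogonal component $h_m$ (via Lemma \ref{fse}) or the periodic component $\tilde g$ (via the induction) is legitimate irrespective of the location of the orthogonal index $i$.
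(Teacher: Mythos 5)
Your argument is correct and is essentially the paper's own proof: both dispatch the component of the top-degree factor orthogonal to its rational Kronecker factor via Lemma \ref{fse}, approximate the remaining $\K_{rat}(T_m)$-measurable component by a periodic function, factor it out by restricting $r$ to multiples of the period (using $p_m(0)=0$ so that $r\mid p_m(rn)$), and induct on $l$. The paper merely phrases your explicit splitting $f_m=\E_\mu(f_m\mid\K_{rat}(T_m))+h_m$ as a without-loss-of-generality reduction to the case where the top-degree function is $\K_{rat}$-measurable.
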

\begin{proof}
Without loss of generality, we may assume that $\deg p_1>\deg p_2>\cdots>\deg p_l$ and $\|f_j\|_\infty\leq 1$.
We use an induction on $l$. The case $l=1$ is known. Suppose that $l\geq 2$. 

If $f_1\perp\K_{rat}(T_1)$, then the lemma immediately follows from Lemma \ref{fsz}.
So we may assume that $f_i\perp \K_{rat}(T_i)$ for some $i\geq 2$ and $f_1$ is $\K_{rat}(T_1)$-measurable.
Furthermore, by an approximation argument, we may assume that $f_1$
is $\K_{r_1}(T_1)$-measurable for some $r_1\in\N$, i.e., $T^{r_1}f_1=f_1$. Then
there exists $r_2\in r_1\N$ such that for any $r\in r_2\N$
\begin{align*}
&\lim_{N-M\to\infty}\bigg\|\frac1{N-M}\sum_{n=M}^{N-1}T_1^{p_1(rn)}f_1\cdot T_2^{p_2(rn)}f_2\cdots
T_l^{p_l(r_2n)}f_l\bigg\|_2\\
=&\lim_{N-M\to\infty}\bigg\|f_1\cdot\frac1{N-M}\sum_{n=M}^{N-1}T_2^{p_2(rn)}f_2\cdots
T_l^{p_l(rn)}f_l\bigg\|_2\\
\leq&\lim_{N-M\to\infty}\bigg\|\frac1{N-M}\sum_{n=M}^{N-1}T_2^{p_2(rn)}f_2\cdots
T_l^{p_l(rn)}f_l\bigg\|_2\leq\epsilon,
\end{align*}
where we used the induction hypothesis in the last step. 
\end{proof}
The following lemma is \cite[Lemma 1.6]{C}.
\begin{Lem}\label{EfX}
Suppose that $(X,\cB,\mu)$ is a probability space and $X_1,\ldots,X_l$ are sub-$\sigma$ algebras of $X$. Then for 
any non-negative $f\in L^2(\mu)$,
$$
\int f\cdot\E(f|X_1)\cdots\E(f|X_l)d\mu\geq\bigg(\int fd\mu\bigg)^{l+1}.
$$
\end{Lem}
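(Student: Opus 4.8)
The plan is to avoid any induction on $l$ and instead reduce the whole statement to two applications of Jensen's inequality, after one change of measure. First I would dispose of the degenerate case $\int f\,d\mu=0$, where $f=0$ almost everywhere and both sides vanish. Otherwise, since both sides of the claimed inequality are homogeneous of degree $l+1$ in $f$, I may rescale and assume $\int f\,d\mu=1$. Writing $g_i=\E(f|X_i)$ for brevity, I introduce the probability measure $\nu$ given by $d\nu=f\,d\mu$ and note that the quantity to be bounded is exactly
$$
\int f\cdot g_1\cdots g_l\,d\mu=\int g_1\cdots g_l\,d\nu=\E_\nu\Big[\prod_{i=1}^l g_i\Big],
$$
so the goal becomes $\E_\nu\big[\prod_i g_i\big]\geq 1$.

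The key observation is that each factor contributes a non-negative amount of entropy. For each $i$ the function $\log g_i$ is $X_i$-measurable, so by the defining property of conditional expectation (testing $f$ against the $X_i$-measurable weight $\log g_i$ and replacing $f$ by $\E(f|X_i)=g_i$),
$$
\E_\nu[\log g_i]=\int (\log g_i)\,f\,d\mu=\int g_i\log g_i\,d\mu.
$$
Since $t\mapsto t\log t$ is convex on $[0,\infty)$ and $\int g_i\,d\mu=\int f\,d\mu=1$ (here I use that $\mu$ is a probability measure), Jensen's inequality yields $\int g_i\log g_i\,d\mu\geq 1\cdot\log 1=0$, hence $\E_\nu[\log g_i]\geq 0$ for every $i$.

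To finish, I would apply Jensen's inequality a second time, now on the probability space $(X,\nu)$ with the convex function $\exp$:
$$
\E_\nu\Big[\prod_{i=1}^l g_i\Big]=\E_\nu\Big[\exp\Big(\sum_{i=1}^l\log g_i\Big)\Big]\geq\exp\Big(\sum_{i=1}^l\E_\nu[\log g_i]\Big)\geq\exp(0)=1,
$$
which is precisely the desired bound. As a sanity check, the case $l=1$ collapses to $\int\E(f|X_1)^2\,d\mu\geq(\int f\,d\mu)^2$, i.e. plain Cauchy--Schwarz, so the entropy argument is the natural higher-order replacement for it.

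I expect the main conceptual hurdle to be choosing this reformulation in the first place: the naive route of peeling off one conditional expectation at a time and trying to run an induction stalls because it forces one to control cross-correlations between $\E(f|X_i)$ and $\E(f|X_j)$, which have no sign in general. Passing to $\nu=f\,d\mu$ and recognizing the factors as relative-entropy contributions is what makes the correlations irrelevant. The only genuinely technical point deserving care rather than cleverness is the measure-theoretic treatment of $\log g_i$ where $g_i$ vanishes: one checks that $\{g_i=0\}$ is $X_i$-measurable and $\nu(\{g_i=0\})=\int_{\{g_i=0\}}f\,d\mu=\int_{\{g_i=0\}}g_i\,d\mu=0$, so $g_i>0$ holds $\nu$-almost everywhere and each $\log g_i$ is finite $\nu$-a.e. (with the convention $0\log 0=0$ in the entropy step and the bound $(\log t)_+\leq t$ ensuring $\log g_i\in L^1(\nu)$). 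With this in hand, both Jensen applications are legitimate and no induction on $l$ is required.
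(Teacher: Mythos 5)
Your argument is correct, but it is worth pointing out that the paper does not actually prove this lemma: it is imported verbatim from \cite[Lemma 1.6]{C}, where (writing $g_i=\E(f|X_i)$) it is established by a short H\"older-type computation --- factor $f=\big(f\prod_{i}g_i\big)^{1/(l+1)}\prod_{i}(f/g_i)^{1/(l+1)}$ on $\{g_1,\ldots,g_l>0\}$, apply H\"older with $l+1$ exponents all equal to $l+1$, and note that $\int f/g_i\,d\mu=\int_{\{g_i>0\}}\E(f|X_i)/g_i\,d\mu=\mu(\{g_i>0\})\leq1$, which gives $\int f\,d\mu\leq\big(\int f\prod_ig_i\,d\mu\big)^{1/(l+1)}$ directly. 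Your relative-entropy route is genuinely different and also complete: passing to $d\nu=f\,d\mu$, proving $\E_\nu[\log g_i]=\int g_i\log g_i\,d\mu\geq0$ by Jensen for $t\log t$, and then applying Jensen for $\exp$ on $(X,\nu)$ all check out, including the observation that $\nu(\{g_i=0\})=0$ and the fact that the conclusion survives even when $\int f\prod_ig_i\,d\mu=+\infty$ (which is possible, since the $g_i$ are only known to lie in $L^2$). The one bookkeeping point you half-state is the $\nu$-integrability of $\log g_i$: the bound $(\log t)_+\leq t$ controls only the positive part (via $\int g_if\,d\mu<\infty$ by Cauchy--Schwarz), while the negative part is finite because $\int(\log g_i)_-\,d\nu=\int(g_i\log g_i)_-\,d\mu\leq e^{-1}$ from $t\log t\geq-e^{-1}$; with that supplied, both Jensen applications are legitimate. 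The H\"older proof buys brevity and avoids the change of measure; yours buys a conceptual reading of the lemma as nonnegativity of $l$ relative-entropy contributions, at the cost of the integrability checks. Incidentally, your remark that a naive induction ``stalls on cross-correlations'' is not quite right --- the H\"older argument above shows the cross-terms never need to be examined --- but this does not affect the validity of your proof.
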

Now we are ready to prove Theorem \ref{main}
\begin{proof}[Proof of Theorem \ref{main}]
By Lemma \ref{fTr}, there exists $r_0\in\N$ such that for each $r\in r_0\N$,
$$
\lim_{N-M\to\infty}\bigg\|\frac1{N-M}\sum_{n=M}^{N-1}T_1^{p_1(rn)}f_1\cdots
T_l^{p_l(rn)}f_l\bigg\|_2\leq\frac{\epsilon}{2^{l+1}}
$$
if some $f_j\perp\K_{rat}(T_j)$. Since $\1_A-\E(\1_A|\K_{rat}(T_j))$ orthogonal to $\K_{rat}(T_j)$, we can get
\begin{align*}
&\lim_{N-M\to\infty}\bigg\|\frac1{N-M}\sum_{n=M}^{N-1}\1_A\cdot T_1^{p_1(rn)}\1_A\cdots
T_l^{p_l(rn)}\1_A\bigg\|_2\\
\geq&\lim_{N-M\to\infty}\bigg\|\frac1{N-M}\sum_{n=M}^{N-1}\1_A\prod_{j=1}^lT_j^{p_j(rn)}\E(\1_A|\K_{rat}(T_j))\bigg\|_2-
\frac{\epsilon}{2^{l+1}}\cdot2^l.
\end{align*}
Next, choose a large $r\in r_0\N$ such that
$$
\|\E(\1_A|\K_r(T_j))-\E(\1_A|\K_{rat}(T_j))\|_2\leq\frac{\epsilon}{23l}
$$
for $j=1,\ldots,l$.
So
$$
\bigg|\int \1_A\prod_{j=1}^lT_j^{p_j(rn)}\E(\1_A|\K_{rat}(T_j))d\mu-\int \1_A\prod_{j=1}^lT_j^{p_j(rn)}\E(\1_A|\K_r(T_j))d\mu\bigg|\leq\frac{\epsilon}3.
$$
Since $p_j(0)=0$, we have $T_j^{p_j(rn)}\E(\1_A|\K_r(T_j))=\E(\1_A|\K_r(T_j))$.
So
\begin{align*}
&\lim_{N-M\to\infty}\frac1{N-M}\sum_{n=M}^{N-1}\int\1_A\cdot T_1^{p_1(rn)}\1_A\cdots
T_l^{p_l(rn)}\1_A d\mu\\
\geq&\lim_{N-M\to\infty}\frac1{N-M}\sum_{n=M}^{N-1}\int\1_A\prod_{j=1}^lT_j^{p_j(rn)}\E(\1_A|\K_{rat}(T_j))d\mu-
\frac{\epsilon}{3}\\
\geq&\int \1_A\prod_{j=1}^l\E(\1_A|\K_r(T_j))d\mu-
\frac{\epsilon}{2}-\frac{\epsilon}{3}\geq \bigg(\int \1_Ad\mu\bigg)^{l+1}-\frac{5\epsilon}6,
\end{align*}
where Lemma \ref{EfX} is used in the last step. Now there exists $N_0=N_0(\epsilon)$ such that if $N-M\geq N_0$, then
$$
\frac1{N-M}\sum_{n=M}^{N-1}\mu(A\cap T_1^{-p_1(rn)}A\cdots
T_l^{p_l(rn)}A)\geq \mu(A)^{l+1}-\epsilon,
$$
i.e., $\mu(A\cap T_1^{-p_1(rn)}A\cdots
T_l^{p_l(rn)}A)\geq \mu(A)^{l+1}-\epsilon$ for some $n\in[M,N]$. Thus the gaps of 
$$
\{n:\,\mu(A\cap T_1^{-p_1(rn)}A\cdots
T_l^{p_l(rn)}A)\geq \mu(A)^{l+1}-\epsilon\}
$$
are bounded by $N_0$.
\end{proof}


\begin{thebibliography}{99}

\bibitem{BHK} V. Bergelson, B. Host and B. Kra, \textit{Multiple recurrence and
nilsequences, with an Appendix by I. Ruzsa},  Invent. Math., {\bf 160}(2005) 261–303.


\bibitem{C} Q. Chu, \textit{Multiple recurrence for two commuting transformations}, Ergodic Theory Dynam. Systems, {\bf 31}(2011), 771-792.

\bibitem{CFH}
Q. Chu, N. Frantzikinakis and B. Host, \textit{Ergodic averages of commuting transformations with distinct degree polynomials iterates}, Proc. London Math. Soc., {\bf 102}(2011), 801-842. 

\bibitem{F} N. Frantzikinakis, \textit{Multiple ergodic averages for three polynomials and applications}, Trans. Amer. Math. Soc., {\bf 360}(2008), 5435–5475.


\bibitem{K} A. Y. Khintchine, \textit{Eine Versch\"arfung des Poincar\'eschen ``Wiederkehr-Satzes"},
Compos. Math., {\bf 1}(1934), 177-179.

\bibitem{HK} B. Host and B. Kra, \textit{Nonconventional ergodic averages and nilmanifolds}, Ann. of Math., {\bf 161}(2005), 397-488.


\end{thebibliography}
\end{document}